\RequirePackage{fix-cm}
\documentclass[smallextended,envcountsect,envcountsame]{svjour3}

\usepackage{amsmath,amssymb,mathtools}
\usepackage{microtype}
\usepackage[hidelinks]{hyperref}

\smartqed
\renewenvironment{proof}[1][Proof]
  {\par\addvspace{\medskipamount}\noindent\textit{#1.}\enspace\normalfont}
  {\qed\par\addvspace{\medskipamount}}
\renewcommand{\makeheadbox}{}
\renewcommand{\subclassname}{\textbf{Mathematics Subject Classification (2020)}\enspace}
\journalname{Journal of Optimization Theory and Applications}
\date{}

\newcommand{\R}{\mathrm{I\!R}}
\newcommand{\trans}{\mathsf{T}}
\newcommand{\norm}[1]{\lVert#1\rVert_2}

\title{Counterexamples to $R$-Superlinear Convergence of Cyclic Steepest Descent}
\titlerunning{Counterexamples for Cyclic Steepest Descent}

\author{Yu Li \and Qihang Wang}
\institute{Yu Li \at
Key Laboratory of Quantum Theory and Applications of MoE, Lanzhou Center for
Theoretical Physics, and Key Laboratory of Theoretical Physics of Gansu Province,
Lanzhou University, Lanzhou, 730000, Gansu, China\\
\email{bosonization@163.com}
\and Qihang Wang, Corresponding author \at
School of Mathematical Sciences, Peking University, Beijing 100871, China\\
\email{2501110049@stu.pku.edu.cn}}

\hypersetup{
  pdftitle={Counterexamples to R-Superlinear Convergence of Cyclic Steepest Descent},
  pdfauthor={Yu Li and Qihang Wang},
  pdfsubject={Exact convergence analysis of cyclic steepest descent},
  pdfkeywords={cyclic steepest descent, convex quadratic optimization, R-superlinear convergence, counterexample, periodic orbit}
}

\begin{document}

\maketitle
\thispagestyle{plain}

\begin{abstract}
The cyclic steepest descent method computes the exact steepest descent
stepsize at the start of each cycle and reuses it for a fixed number $m$ of
iterations.  For strictly convex quadratics in $n$ variables, it has been
suggested that the method converges $R$-superlinearly whenever
$m\ge(n+1)/2$.  We show that this cannot hold for every initial point, already
at the threshold cycle length.  For $n=m=2$ we exhibit an initial point from
which the method follows the classical zigzag trajectory of steepest descent:
the stepsize never changes and the error is halved at every iteration.  The
same mechanism yields linearly convergent trajectories for every Hessian with
at least two distinct eigenvalues and every cycle length.  For $n=3$ and
$m=2$ we construct a second, genuinely three-dimensional counterexample: all
components of the gradient remain nonzero, consecutive cycles use two
different stepsizes, and every four iterations reduce the gradient exactly by
the factor $1/49$.  Both trajectories are given in closed form.  Consequently,
any $R$-superlinear convergence result for this method at the threshold must
exclude a set of initial points, as recent almost-everywhere results do.
\end{abstract}

\keywords{cyclic steepest descent \and convex quadratic optimization \and
R-superlinear convergence \and counterexample \and periodic orbit}
\subclass{Primary 90C20 \and Secondary 65K05, 90C25}

\section{Introduction}

Cyclic steepest descent (CSD) reuses an exact steepest-descent stepsize for a
prescribed number of consecutive updates.  It is a special delayed gradient
method in the sense of Friedlander, Mart\'inez, Molina, and
Raydan~\cite{friedlander1999}; Dai and Fletcher studied its asymptotic behavior
on positive-definite quadratics~\cite{daifletcher2005}.  With reference to that
analysis, Dai's ICM 2022 survey states that CSD is likely to converge
$R$-superlinearly when the cycle length satisfies
$m\ge\lceil(n+1)/2\rceil$~\cite{daiicm2022}; the survey writes the condition
as $m\ge(n+1)/2$, which for integer $m$ is the ceiling condition used
here and is equivalent to $n<2m$, the superlinear side of the threshold in
the simplified model of Dai and Fletcher.  The ICM Conjectures directory
records the corresponding universal assertion as problem
2022-opt-18~\cite{icmconjectures}.
Our counterexamples address this universal formulation, not an
almost-everywhere interpretation of the survey's conjectural statement.

Theorem~\ref{thm:p5640} gives a counterexample at the smallest nontrivial
dimension and cycle length (for $n=1$ the method terminates after one
update), with error norm ratio $1/2$ at every actual update.  The statement describes the complete orbit, not a finite numerical
trace.  Proposition~\ref{prop:invariant-orbit} extends this invariant family
to arbitrary pairs of eigendirections and cycle lengths.

Theorem~\ref{thm:three-spectrum} gives a second obstruction at the
three-dimensional threshold: $A=\operatorname{diag}(1,2,3)$ and $m=2$.
All three spectral components remain nonzero.  The cycle-boundary weights
alternate between two interior points of the simplex, and every four
updates multiply the gradient and error by $1/49$.  The proof identifies
the two Rayleigh quotients and verifies the two weight transitions exactly.
Thus both counterexamples describe entire trajectories, but only the first
uses a constant stepsize.

The underlying spectral mechanism is classical for ordinary exact-line-search steepest
descent.  Akaike's probability-map analysis contains the equal-weight fixed
distribution~\cite{akaike1959}, and Yu recently stated the corresponding
worst-case linear orbit and its higher-dimensional embedding explicitly
\cite[Proposition~3.1 of the arXiv version]{yu2025}.  The decisive observation for CSD is that the
exact stepsize is constant on this orbit.  Consequently, cyclic reuse for any
number of updates preserves the orbit exactly, turning the classical
equal-weight trajectory into a threshold counterexample to the universal CSD
assertion.

Periodic behavior under cyclic reuse has a direct precedent.
Dai, Hager, Schittkowski, and Zhang~\cite[Section~3.3]{cbb2006}
identify the alternate-step method as CSD with $m=2$ and construct a
five-dimensional cyclic example.  Theorem~\ref{thm:three-spectrum}
provides a closed-form full-support example at the three-dimensional
threshold; periodic CSD behavior itself is already documented.
The alternate minimization method of Dai and Yuan~\cite{daiyuan2003}
uses different steps and has a two-dimensional $Q$-superlinear result.
Shen, Li, and Dai~\cite[Theorem~1]{shen2025v1} state $R$-superlinear
convergence of the cycle-boundary gradient subsequence for two-dimensional
CSD; their argument parameterizes the initial gradient by the logarithm of
its component ratio and uses that this logarithm is nonzero.  The balanced
initializations of Theorem~\ref{thm:p5640} and
Proposition~\ref{prop:invariant-orbit} are exactly the fixed point of their
recurrence at which this logarithm vanishes, where the cycle-boundary subsequence is
geometric; this boundary case is treated here.  Their revised preprint
studies the triangle steepest-descent method instead~\cite{shen2025v2}.

The companion paper~\cite[Theorem~3.1]{liwangpositive2026} classifies CSD
for Hessians with exactly two distinct eigenvalues and arbitrary
eigenspace multiplicities.  For $m\ge2$, the equal-norm spectral-block
cone is precisely the nonterminating exceptional set; every other mixed
initialization is $R$-superlinear.  In the fixed two-dimensional instance
here this cone is the union $|g_{0,1}|=|g_{0,2}|>0$ of two lines.
The two papers therefore separate the construction of exact exceptions from
the classification of all initializations for two-spectrum Hessians.
A recent preprint of Gu~\cite[Theorem~2.1]{gu2026} states an
almost-everywhere doubly exponential convergence theorem for general
spectra when $2m$ exceeds the number of distinct eigenvalues.
Both instances here satisfy that condition, so by that theorem the orbits
constructed below must lie in its Lebesgue-null exceptional set; by
Theorem~\ref{thm:three-spectrum} that set can contain full-support,
nonresonant orbits, so it cannot be dropped.  Exact exceptional orbits and
almost-everywhere convergence are compatible.  Neither positive result is
needed in our proofs.

Section~\ref{sec:csd} fixes the method, the notation, and the rate
terminology.  Section~\ref{sec:examples} proves the two counterexamples
and the invariant family.  Section~\ref{sec:conclusions} summarizes the
consequences.

\section{Cyclic Steepest Descent}
\label{sec:csd}

On $\R^n$, let
\begin{equation}\label{eq:quadratic}
  q(x)=\frac12 x^{\trans}Ax-c^{\trans}x,
  \qquad A=A^{\trans}\succ0,
\end{equation}
where $c\in\R^n$, and write $g_k=Ax_k-c$.
All norms are Euclidean.
The method terminates whenever $g_k=0$.
Otherwise, the exact steepest-descent stepsize is
\begin{equation}\label{eq:sd-step}
  \alpha_k^{\mathrm{SD}}
  :=\frac{g_k^{\trans}g_k}{g_k^{\trans}Ag_k}.
\end{equation}

Let $m\ge1$ be an integer.  At the boundary $m\ell$ of cycle $\ell$, CSD
computes $\alpha_{m\ell}^{\mathrm{SD}}$ from~\eqref{eq:sd-step} and performs
the $m$ updates
\begin{equation}\label{eq:csd}
  x_{m\ell+i+1}
  =x_{m\ell+i}-\alpha_{m\ell}^{\mathrm{SD}}g_{m\ell+i},
  \qquad i=0,\ldots,m-1.
\end{equation}
Thus a newly computed exact stepsize is used for exactly $m$ consecutive
updates.  Equivalent index shifts do not change the method.  The unique
minimizer of~\eqref{eq:quadratic} is $x_*=A^{-1}c$.  Set
\begin{equation}\label{eq:error-gradient}
  e_k=x_k-x_*,\qquad g_k=Ae_k.
\end{equation}
For any update with stepsize $\alpha_k$,
\begin{equation}\label{eq:gradient-update}
  g_{k+1}=(I-\alpha_k A)g_k.
\end{equation}

A null sequence $\{e_k\}$ converges $R$-superlinearly if
$\norm{e_k}^{1/k}\to0$, that is, if its root-convergence factor
$\limsup_k\norm{e_k}^{1/k}$ is zero~\cite[Chapter~9]{ortega2000};
equivalently, $\norm{e_k}$ is bounded by a null sequence whose successive
ratios tend to zero, which is the form used in~\cite{shen2025v1}.  It
converges $R$-linearly if this root factor lies in $(0,1)$.
A positive geometric lower bound on the errors, even along a fixed-stride
subsequence, forces a positive root factor and hence excludes
$R$-superlinear convergence.  This elementary observation suffices for the
exact orbits below.

\section{Exact Counterexamples}
\label{sec:examples}

\subsection{The Balanced Zig-Zag Obstruction}

\begin{theorem}[Exact threshold counterexample]
  \label{thm:p5640}
  Let
  \begin{equation}\label{eq:p5640-data}
    n=2,\qquad m=2,\qquad
    A=\begin{pmatrix}1&0\\0&3\end{pmatrix},\qquad
    c=\begin{pmatrix}0\\0\end{pmatrix},\qquad
    x_0=\begin{pmatrix}1\\1/3\end{pmatrix}.
  \end{equation}
  Every stepsize recomputed by CSD is $1/2$, and the complete orbit is
  \begin{align}
    g_k&=2^{-k}\begin{pmatrix}1\\(-1)^k\end{pmatrix},
      \label{eq:p5640-gradient}\\
    x_k=e_k&=2^{-k}\begin{pmatrix}1\\(-1)^k/3\end{pmatrix}.
      \label{eq:p5640-error}
  \end{align}
  It is nonterminating and satisfies
  \begin{equation}\label{eq:p5640-ratio}
    \frac{\norm{e_{k+1}}}{\norm{e_k}}=\frac12
    \qquad(k\ge0).
  \end{equation}
  Consequently, the assertion of universal $R$-superlinear convergence for
  $m\ge\lceil(n+1)/2\rceil$ does not hold.
\end{theorem}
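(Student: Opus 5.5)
The approach is induction on $k$, with the cycle structure handled by one observation: the stepsize the method would compute at any iterate of the claimed orbit is $1/2$. Since $c=0$, we have $x_*=0$, so $e_k=x_k$ and $g_k=Ae_k$. Thus \eqref{eq:p5640-error} follows from \eqref{eq:p5640-gradient} by applying $A^{-1}=\operatorname{diag}(1,1/3)$, and it suffices to prove the gradient formula. At $k=0$ we get $g_0=Ax_0=(1,1)^{\trans}$, which matches \eqref{eq:p5640-gradient}.

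\textbf{Key computation.} For any vector $g=s(1,\pm1)^{\trans}$ with $s\neq0$,
\[
g^{\trans}g=2s^2,\qquad g^{\trans}Ag=(1+3)s^2=4s^2,
\]
so \eqref{eq:sd-step} gives $\alpha^{\mathrm{SD}}=1/2$. Moreover,
\[
I-\tfrac12A=\operatorname{diag}\bigl(\tfrac12,-\tfrac12\bigr),
\]
so \eqref{eq:gradient-update} maps $2^{-k}(1,(-1)^k)^{\trans}$ to $2^{-k-1}(1,(-1)^{k+1})^{\trans}$.

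\textbf{Induction.} The inductive claim is that \eqref{eq:p5640-gradient} holds at $k$ and that every stepsize used up to step $k$ equals $1/2$. Suppose this holds for $k$. If $k=2\ell$ is a cycle boundary, the recomputed stepsize is $1/2$ by the computation above, since $g_k\neq0$ has the form $s(1,\pm1)^{\trans}$. If $k$ is odd, the update reuses $\alpha_{k-1}^{\mathrm{SD}}=1/2$. In either case the update multiplies $g_k$ by $\operatorname{diag}(1/2,-1/2)$, which gives the formula at $k+1$. The point is that the invariant set $\{s(1,\pm1)^{\trans}\}$ is mapped into itself and the exact stepsize is constant on it. Cycle reuse therefore has no effect, and the same argument would work for any $m$.

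\textbf{Remaining claims.} Since $g_k\neq0$ for every $k$, the method never terminates. The norm is
\[
\norm{e_k}=2^{-k}\sqrt{1+1/9},
\]
which gives the ratio \eqref{eq:p5640-ratio}. Hence $\norm{e_k}^{1/k}\to1/2\neq0$, so the convergence is not $R$-superlinear. Finally, $n=2$ and $m=2\ge\lceil 3/2\rceil=2$, so this contradicts the universal assertion.

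I expect no real obstacle. The only care needed is in the induction bookkeeping: the reused stepsize at odd $k$ must be the one computed at $k-1$ and not recomputed, and the argument above handles this case explicitly.
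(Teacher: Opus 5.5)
Your proposal is correct and matches the paper's proof: both show that the exact stepsize is $1/2$ on the balanced set $\{s(1,\pm1)^{\trans}: s\ne0\}$, that $I-\tfrac12A=\operatorname{diag}(1/2,-1/2)$ maps this set into itself, and then conclude by induction (the paper over cycles, you over iterations) that $\norm{e_k}=(\sqrt{10}/3)\,2^{-k}$. Your closing remark that the argument works for every $m$ is exactly the paper's subsequent two-eigendirection proposition.
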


\begin{proof}[Proof of Theorem~\ref{thm:p5640}]
Here $x_*=0$ and $g_0=(1,1)^{\trans}$.  Whenever a cycle starts with two
gradient components of equal magnitude, its recomputed stepsize is
\[
  \frac{g^{\trans}g}{g^{\trans}Ag}=\frac12.
\]
For this stepsize,
\[
  I-\frac12 A=\begin{pmatrix}1/2&0\\0&-1/2\end{pmatrix}.
\]
The two component magnitudes therefore remain equal after each of the two
updates in a cycle.  Induction over the cycles gives
\eqref{eq:p5640-gradient}; applying $A^{-1}$ gives
\eqref{eq:p5640-error}, and~\eqref{eq:p5640-ratio} follows.  In particular
$\norm{e_k}=(\sqrt{10}/3)\,2^{-k}$, so the error sequence has
exactly $R$-linear convergence with factor $1/2$ and hence, by the
observation of Section~\ref{sec:csd}, is not $R$-superlinearly convergent.  Finally, $m=2=\lceil(2+1)/2\rceil$ is the
smallest admissible integer cycle length, so the universal assertion fails at
its threshold.  The proof is complete.
\end{proof}

The same calculation extends to any pair of eigendirections and any
cycle length.

\begin{proposition}[Two-eigendirection invariant family]
  \label{prop:invariant-orbit}
  Let $A\succ0$ have orthonormal eigenvectors $u,v$ with distinct eigenvalues
  $0<a<b$, respectively.  Fix any cycle length $m\ge1$, any
  $t\in\R\setminus\{0\}$, and any $\sigma\in\{-1,1\}$.  Initialize CSD so
  that
  \begin{equation}\label{eq:initial-gradient-general}
    g_0=t(u+\sigma v).
  \end{equation}
  Define
  \begin{equation}\label{eq:rho}
    \rho:=\frac{b-a}{b+a}\in(0,1).
  \end{equation}
  Then every stepsize recomputed at a cycle boundary is
  \begin{equation}\label{eq:constant-step}
    \alpha_{m\ell}^{\mathrm{SD}}=\frac{2}{a+b},
    \qquad \ell=0,1,\ldots,
  \end{equation}
  and the full orbit is
  \begin{align}
    g_k&=t\rho^k\bigl(u+(-1)^k\sigma v\bigr),
      \label{eq:gradient-orbit}\\
    e_k&=t\rho^k\bigl(a^{-1}u
      +(-1)^k\sigma b^{-1}v\bigr).
      \label{eq:error-orbit}
  \end{align}
  In particular,
  \begin{equation}\label{eq:constant-ratios}
    \frac{\norm{g_{k+1}}}{\norm{g_k}}
    =\frac{\norm{e_{k+1}}}{\norm{e_k}}
    =\rho
    \qquad(k\ge0).
  \end{equation}
  None of the full gradient sequence, the full error sequence, and their
  cycle-boundary subsequences converges $R$-superlinearly.
\end{proposition}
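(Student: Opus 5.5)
The plan is to prove the gradient formula \eqref{eq:gradient-orbit} by induction over iterations, while keeping track of the cycle structure. The key invariant is that the gradient has the balanced form $g=s(u+\tau v)$ with $s\neq0$ and $\tau\in\{-1,1\}$.

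First I compute the exact stepsize on such a balanced vector. Orthonormality gives $g^{\trans}g=2s^2$ and $g^{\trans}Ag=s^2(a+b)$, since $Au=au$ and $Av=bv$. Hence \eqref{eq:sd-step} yields $2/(a+b)$, independent of $s$ and $\tau$. Next, the matrix $I-\tfrac{2}{a+b}A$ acts on $u$ by the factor $1-2a/(a+b)=\rho$ and on $v$ by $1-2b/(a+b)=-\rho$. By \eqref{eq:gradient-update}, one update with this stepsize therefore maps $s(u+\tau v)$ to $s\rho\bigl(u-\tau v\bigr)$. The result is again balanced, with $s$ replaced by $s\rho\neq0$ (because $0<\rho<1$) and $\tau$ replaced by $-\tau$.

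The induction hypothesis at step $k$ is that $g_k=t\rho^k(u+(-1)^k\sigma v)$, and that every cycle-boundary stepsize used so far equals $2/(a+b)$. Suppose $k=m\ell+i$. If $i=0$, the stepsize is recomputed from $g_k$; it is balanced, so by the first computation the stepsize is $2/(a+b)$, which proves \eqref{eq:constant-step} for this $\ell$. If $i>0$, the stepsize is the one recomputed at $m\ell$, which equals $2/(a+b)$ by the hypothesis. In either case the second computation gives $g_{k+1}$ in the required form. The base case is \eqref{eq:initial-gradient-general}. Since $g_k\neq0$ for all $k$, the method never terminates, so every stepsize is well defined.

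The error orbit \eqref{eq:error-orbit} follows from $e_k=A^{-1}g_k$, using $A^{-1}u=a^{-1}u$ and $A^{-1}v=b^{-1}v$. For the norms, $\norm{g_k}=\sqrt2\,|t|\rho^k$ and $\norm{e_k}=|t|\rho^k\sqrt{a^{-2}+b^{-2}}$, and the sign factor $(-1)^k\sigma$ does not affect either norm. This gives \eqref{eq:constant-ratios}. Finally, each sequence is a positive constant times $\rho^k$, so its root factor is exactly $\rho>0$. Along the cycle-boundary subsequence, indexed by $\ell$ with $k=m\ell$, the norms equal a positive constant times $(\rho^m)^\ell$, so its root factor is $\rho^m>0$. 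By the observation at the end of Section~\ref{sec:csd}, none of these sequences converges $R$-superlinearly.

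There is no real obstacle here. The only step needing care is the bookkeeping in the induction: reused steps within a cycle must be tied to the value computed at the cycle boundary, and the invariant must hold at every boundary, which is why the balanced form is carried through every update and not only checked at cycle starts.
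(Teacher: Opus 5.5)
Your proposal is correct and follows essentially the same argument as the paper: you compute the exact stepsize $2/(a+b)$ on any balanced vector $s(u+\tau v)$, and you note that $I-\tfrac{2}{a+b}A$ scales $u$ by $\rho$ and $v$ by $-\rho$, so balance is preserved through every update and hence at every cycle boundary; you then apply $A^{-1}$ and read off the geometric factors $\rho$ and $\rho^m$. Your explicit bookkeeping of reused versus recomputed steps in the induction is simply a more detailed version of the paper's remark that the components keep equal magnitude ``after any number of updates.''
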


\begin{proof}[Proof of Proposition~\ref{prop:invariant-orbit}]
For a vector of the form $s(u+\tau v)$ with $s\ne0$ and
$\tau\in\{-1,1\}$, orthonormality and the eigenvalue relations give
\[
  \frac{g^{\trans}g}{g^{\trans}Ag}
  =\frac{2s^2}{(a+b)s^2}
  =\frac{2}{a+b}.
\]
For this stepsize,
\[
  (I-\alpha A)u=\rho u,
  \qquad
  (I-\alpha A)v=-\rho v.
\]
Equation~\eqref{eq:gradient-update} therefore changes the sign of the
$v$-component and multiplies both component magnitudes by $\rho$.  After any
number of updates, and hence at every cycle boundary, the two components
still have equal magnitude.  Induction proves~\eqref{eq:constant-step} and
\eqref{eq:gradient-orbit}.  Applying $A^{-1}$ gives
\eqref{eq:error-orbit}.

The norms of the two parenthesized vectors in
\eqref{eq:gradient-orbit} and~\eqref{eq:error-orbit} are independent of $k$,
so~\eqref{eq:constant-ratios} follows.  The full sequences are geometric with
factor $\rho>0$, while their cycle-boundary subsequences are geometric with
factor $\rho^m>0$ in the cycle index; by the observation of
Section~\ref{sec:csd}, none of them converges $R$-superlinearly.
The proof is complete.
\end{proof}

\subsection{A Full-Support Non-Zig-Zag Obstruction}

\begin{theorem}[Full-support three-spectral period-two counterexample]
  \label{thm:three-spectrum}
  Let
  \begin{equation}\label{eq:three-spectrum-data}
    n=3,\qquad m=2,\qquad A=\operatorname{diag}(1,2,3),\qquad c=0,
  \end{equation}
  and set
  \begin{equation}\label{eq:three-spectrum-initial}
    g_0=\begin{pmatrix}(2+\sqrt2)/4\\[2pt]1/2\\[2pt](2-\sqrt2)/4\end{pmatrix},
    \qquad
    x_0=A^{-1}g_0
      =\begin{pmatrix}(2+\sqrt2)/4\\[2pt]1/4\\[2pt](2-\sqrt2)/12\end{pmatrix}.
  \end{equation}
  Define
  \[
    \mu_- :=2-\frac{\sqrt2}{2},\qquad
    \mu_+ :=2+\frac{\sqrt2}{2},\qquad
    \alpha_\pm:=\frac1{\mu_\pm}.
  \]
  The CSD orbit is nonterminating, has full support (every component of
  $g_k$ is nonzero for every $k$), and is nonresonant: no cycle-boundary
  stepsize annihilates a spectral component, i.e., $\mu_\pm\ne\lambda_i$ for
  every $i$.  The orbit has projective period two at cycle boundaries: writing
  $w_{k,i}=g_{k,i}^2/\sum_{j=1}^3g_{k,j}^2$, the normalized squared component
  weights $w_{2\ell}$ alternate between two distinct interior points of the simplex.
  The stepsize pattern is
  $\alpha_-,\alpha_-,\alpha_+,\alpha_+,\ldots$, and
  \begin{equation}\label{eq:three-spectrum-contraction}
    g_{k+4}=\frac1{49}g_k,\qquad e_{k+4}=\frac1{49}e_k
    \qquad(k\ge0).
  \end{equation}
  Consequently the orbit is $R$-linearly convergent with root
  factor $49^{-1/4}=1/\sqrt7$, but not $R$-superlinearly convergent, and it
  is not a two-dimensional balanced-zigzag orbit.
\end{theorem}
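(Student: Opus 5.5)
The plan is to verify the orbit over one four-step period by exact componentwise computation and then propagate it by induction over cycle pairs. Since $A=\operatorname{diag}(1,2,3)$ and $c=0$, every update~\eqref{eq:gradient-update} acts diagonally: with stepsize $\alpha=1/\mu$ it multiplies the $i$-th gradient component by $1-\lambda_i/\mu$, where $\lambda_i=i$. A CSD cycle with $m=2$ started at $g_{2\ell}$ therefore gives $g_{2\ell+2,i}=(1-\lambda_i/\mu)^2g_{2\ell,i}$, where $\mu=g_{2\ell}^{\trans}Ag_{2\ell}/g_{2\ell}^{\trans}g_{2\ell}$ is the Rayleigh quotient at the boundary. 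The error follows from $e_k=A^{-1}g_k$, since $A^{-1}$ commutes with every factor $I-\alpha A$. So all statements about $e_k$ reduce to statements about $g_k$.

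\emph{Step 1: the two Rayleigh quotients.} For $g_0$ the squared components are $(6+4\sqrt2)/16$, $4/16$ and $(6-4\sqrt2)/16$. They sum to $1$, and the weighted sum $\sum_i\lambda_i g_{0,i}^2$ equals $(32-8\sqrt2)/16=\mu_-$. So the first cycle uses $\alpha_-$ twice. Next I compute $g_2$ explicitly as $g_{2,i}=(1-\lambda_i/\mu_-)^2g_{0,i}$ and check, by direct arithmetic in $\mathbb{Q}(\sqrt2)$, that its Rayleigh quotient is $\mu_+$. It is convenient to verify this as $\sum_i g_{2,i}^2(\lambda_i-\mu_+)=0$. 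This verification is the only genuinely computational step and the one I expect to be the main obstacle. It is routine but error-prone, so I would rationalize each factor $1-\lambda_i/\mu_-$ first.

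\emph{Step 2: the four-step contraction.} After the second cycle, $g_{4,i}=\bigl[(1-\lambda_i/\mu_-)(1-\lambda_i/\mu_+)\bigr]^2g_{0,i}$. The key identity is
\[
\Bigl(1-\frac{\lambda}{\mu_-}\Bigr)\Bigl(1-\frac{\lambda}{\mu_+}\Bigr)=\frac{(\mu_--\lambda)(\mu_+-\lambda)}{\mu_-\mu_+},
\qquad \mu_-\mu_+=\frac72.
\]
The numerator equals $1/2$ for $\lambda=1$, $-1/2$ for $\lambda=2$, and $1/2$ for $\lambda=3$. So each product is $\pm1/7$, and its square is $1/49$ for every $i$. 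Hence $g_4=g_0/49$.

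\emph{Step 3: induction and the remaining properties.} The Rayleigh quotient is scale invariant, so $g_4=g_0/49$ implies that the cycle from $g_4$ reproduces the computation from $g_0$ with everything scaled by $1/49$. Induction on the cycle-pair index then gives the stepsize pattern $\alpha_-,\alpha_-,\alpha_+,\alpha_+,\ldots$ and~\eqref{eq:three-spectrum-contraction}, with $e_{k+4}=e_k/49$ obtained by applying $A^{-1}$.

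The remaining properties follow quickly:
\begin{itemize}
\item Nonresonance holds because $\mu_\pm$ are irrational, so they differ from $1,2,3$.
\item Full support and nontermination follow because every factor $1-\lambda_i/\mu_\pm$ is nonzero and $g_0$ has nonzero components.
\item The boundary weights $w_{0}$ and $w_2$ lie in the interior of the simplex by full support. They are distinct because their Rayleigh quotients $\sum_i\lambda_iw_i$ differ ($\mu_-\neq\mu_+$), and they alternate by the period-four relation.
\item Since $\norm{e_{4j+r}}=49^{-j}\norm{e_r}$ with $\norm{e_r}>0$, the root factor is exactly $49^{-1/4}=1/\sqrt7>0$. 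By the observation of Section~\ref{sec:csd}, the orbit is $R$-linear but not $R$-superlinear.
\item It is not a balanced zigzag orbit in the sense of Proposition~\ref{prop:invariant-orbit}. Such orbits have at most two nonzero spectral components and a constant stepsize, whereas here all three components are nonzero and the stepsize takes two distinct values.
\end{itemize}
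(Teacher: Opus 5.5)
Your proposal is correct and follows essentially the paper's proof. Both identify the two boundary Rayleigh quotients $\mu_\mp$, then use $(\mu_--\lambda_i)(\mu_+-\lambda_i)=\pm\tfrac12$ and $\mu_-\mu_+=\tfrac72$ to get the uniform factor $1/49$. The one organizational difference is that you close the period via $g_4=g_0/49$ and scale invariance, whereas the paper checks the second transition $T(w^+)=w^-$ separately.

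The step you flag as the main obstacle is actually immediate. One has $(\mu_--\lambda_i)^2g_{0,i}=\tfrac12\,g_{0,4-i}$ for $i=1,2,3$, so $g_2$ is a positive multiple of the reversed vector $(g_{0,3},g_{0,2},g_{0,1})^{\trans}$. This is the $\sqrt2\mapsto-\sqrt2$ conjugate of $g_0$, whose weights are the paper's $w^+$, and its Rayleigh quotient is $4-\mu_-=\mu_+$.
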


\begin{proof}[Proof of Theorem~\ref{thm:three-spectrum}]
  For a nonzero gradient $g$, write
  $w_i=g_i^2/\sum_{j=1}^3g_j^2$ and
  $\mu=\sum_{i=1}^3\lambda_iw_i$, where
  $(\lambda_1,\lambda_2,\lambda_3)=(1,2,3)$.
  At a cycle boundary the exact stepsize is
  $\alpha=1/\mu$; after two updates the $i$th gradient component is multiplied
  by $(1-\lambda_i/\mu)^2$.  Therefore the normalized squared component
  weights obey the map
  \begin{equation}\label{eq:three-spectrum-weight-map}
    T_i(w)=\frac{w_i|\mu-\lambda_i|^4}
      {\sum_jw_j|\mu-\lambda_j|^4}.
  \end{equation}
  Let $w^-=(g_{0,1}^2,g_{0,2}^2,g_{0,3}^2)$, whose entries sum to one, so
  that $w^-$ is the initial weight vector; let $w^+$ be obtained by
  interchanging $\sqrt2$ with $-\sqrt2$ in the three entries.  Explicitly,
  \[
    w^-=\left(\frac{3+2\sqrt2}{8},\frac14,\frac{3-2\sqrt2}{8}\right),
    \qquad
    w^+=\left(\frac{3-2\sqrt2}{8},\frac14,\frac{3+2\sqrt2}{8}\right).
  \]
  Both vectors are positive and sum to one, so they are interior points of the
  simplex.  Their weighted means are $\mu_-$ and $\mu_+$, respectively.
  Since
  \[
    (1-\mu_-)(1-\mu_+)=\frac12,\quad
    (2-\mu_-)(2-\mu_+)=-\frac12,\quad
    (3-\mu_-)(3-\mu_+)=\frac12,
  \]
  direct substitution gives
  \[
    w_i^-|\mu_--\lambda_i|^4=\frac14w_i^+,
    \qquad
    w_i^+|\mu_+-\lambda_i|^4=\frac14w_i^-.
  \]
  Hence $T(w^-)=w^+$ and $T(w^+)=w^-$, so
  $w_{4\ell}=w^-$ and $w_{4\ell+2}=w^+$ for every $\ell\ge0$.
  This proves the cycle-boundary projective period two
  and the displayed stepsize pattern.  Since $\mu_\pm\notin\{1,2,3\}$, each
  update multiplies every gradient component by a nonzero factor
  $1-\lambda_i\alpha_\pm$; hence all three components are nonzero at every
  iteration, so the orbit is nonterminating, has full support, and is
  nonresonant.  Moreover
  $\mu_-\mu_+=7/2$, so over the four updates the $i$th gradient component is
  multiplied by
  \[
    \left[\frac{(\mu_--\lambda_i)(\mu_+-\lambda_i)}
      {\mu_-\mu_+}\right]^2=\frac1{49}
  \]
  for each $i$.  Any four consecutive updates contain two factors of each
  type, and these diagonal update matrices commute.  Thus the identity holds
  for every starting index $k$, proving~\eqref{eq:three-spectrum-contraction}; applying
  $A^{-1}$ gives the error relation.  The subsequence $\{\norm{e_{4k}}\}$ is
  positive geometric with ratio $1/49$, so by the observation of
  Section~\ref{sec:csd} the orbit is not $R$-superlinearly convergent, while
  \eqref{eq:three-spectrum-contraction} gives the stated $R$-linear root
  factor $49^{-1/4}$.  The proof is complete.
\end{proof}

\begin{remark}[Higher-dimensional embedding]
For every $n\ge2$ and $m\ge1$, augment
$\operatorname{diag}(a,b)$ by arbitrary positive diagonal entries and
initialize the added gradient components at zero.  They remain zero, so the
active plane follows Proposition~\ref{prop:invariant-orbit}, and the
obstruction occurs in every dimension and at every cycle length as a
lower-dimensional support phenomenon.  Theorem~\ref{thm:three-spectrum} is
different: all three spectral components are nonzero, so its obstruction is
full-support.  Either theorem alone disproves the universal statement.
\end{remark}

\section{Conclusions}
\label{sec:conclusions}

Cyclic reuse preserves the classical balanced orbit and also admits the
full-support three-dimensional periodic orbit constructed above.
Together these examples show why a universal convergence guarantee must
account for exact exceptional initializations, even at the proposed cycle
threshold.  The three-dimensional construction also shows that full spectral
support does not exclude periodic projective dynamics.

Sampling from a distribution absolutely continuous with respect to Lebesgue
measure misses a fixed Lebesgue-null set with probability one, whereas
prescribed or discretely sampled initializations may lie on it.
Exceptional-orbit checks therefore matter for guarantees covering a specified
initialization, alongside almost-everywhere convergence results.

\begin{acknowledgements}

We thank Yu-Hong Dai for helpful comments on the exceptional-orbit calculation,
its interpretation as a classical zigzagging trajectory, and the distinction
between universal and almost-everywhere convergence.
We acknowledge Gewu Intelligence Lab for providing the collaborative research
environment in which this project was developed.
\end{acknowledgements}

\section*{Declarations}

\noindent\textbf{Funding.} No funding was received for this work.

\noindent\textbf{Competing interests.} The authors declare no competing interests.

\noindent\textbf{Data availability.} No datasets were generated or analysed.  The results are established
analytically in the article.

\section*{Disclosure of automated assistance}

The main results of this paper were obtained through a generative-AI
workflow using OpenAI GPT-5.6 Sol, Anthropic Claude Fable 5, OpenAI GPT-6
Astra, OpenAI GPT-6.1 Sol, Anthropic Claude Fable 5.1, and Anthropic Claude
Opus 5.5.
The workflow contributed to identifying and checking the exact
counterexamples, reviewing the cited literature, and preparing and revising
the LaTeX manuscript.

\end{document}